\newcommand{\R}{\mathbb{R}}
\newcommand{\ve}{\varepsilon}
\newcommand{\supp}{\text{\rm supp}\,}
\newcommand{\re}{\mathbb{R}}
\newcommand{\FSM}{\operatorname{FS}}
\newcommand{\FS}{\FSM}
\renewcommand{\subset}{\subseteq}
\newcommand{\BUC}{\operatorname{BUC}}
\newcommand{\AP}{\operatorname{AP}}
\newcommand{\WAP}{\operatorname{WAP}}
\newcommand{\BB}{{\mathcal B}}
\renewcommand{\AA}{{\mathcal A}}
\renewcommand{\supp}{\operatorname{supp}}
\theoremstyle{plain}
\newtheorem{theorem}{Theorem}[section]
\newtheorem{lemma}{Lemma}[section]
\theoremstyle{definition}
\theoremstyle{remark}
\numberwithin{equation}{section}
\begin{document}

\title[Ergodic Functions]
{Ergodic Functions That are not Almost Periodic Plus $L^1-$Mean Zero }
\author{Jean Silva}
\address{Departamento de Matem\'atica, Universidade Federal de Minas Gerais}
\email{jean@mat.ufmg.br}

\keywords{ergodic functions, almost-periodic functions, weakly almost periodic functions, algebra with mean 
value, homogenization.}
\subjclass{Primary: 35B40, 35B35, 74Q10; Secondary: 35L65, 35K55, 35B27}
\date{}

\begin{abstract}
Ergodic Functions are bounded uniformly continuous $(\BUC)$ functions that are typical realizations of continuous stationary ergodic process. 
A natural question is whether such functions are always the sum of an almost periodic with an $L^1-$mean zero $\BUC$ function. 
The paper answers this question presenting a framework that can provide infinitely many ergodic functions that are not almost periodic plus $L^1-$
mean zero.
\end{abstract}

\maketitle

\section{Introduction} \label{S:0}
Let $(\Omega,\mathcal{A},\mu)$ be a probability space and $T:\R^n\times\Omega\to\Omega$ a family of mappings (which we shall call dynamical system) with the followings properties:
\begin{enumerate}
\item[$(T_1)$](Group Property) $T(0)=Id$, $T(x+y)=T(x)\circ T(y)$, where $Id:\Omega\to\Omega$ is the identity mapping.
\item[$(T_2)$](Invariance) For every $x\in\R^n$ and every set $E\in\mathcal{A}$, we have 
$$
\text{$T(x)E\in\mathcal{A}$ and $\mu(T(x)E)=\mu(E)$}.
$$
\item[$(T_3)$](Measurability) For any measurable function $f:\Omega\to\R$, the function $f(T(x)\omega)$ defined on the cartesian product 
$\R^n\times\Omega$ is also measurable. 
\end{enumerate}
We shall say that the dynamical system $T$ is ergodic if  for any measurable function $f:\Omega\to\R$ satisfying $f(T(x)\omega)=f(\omega)$ 
for any $x\in\R^n$ and $\mu-$almost everywhere $\omega\in\Omega$ we must have $f$ is constant $\mu-$almost everywhere in $\Omega$. It is 
well known that this notion of ergodicity is equivalent to the property: If $E\in\mathcal{A}$ 
satisfies $T(x)E=E$ for all $x\in\R^n$, then $\mu(E)\in\{0,1\}$.

A measurable function $F:\R^n\times\Omega\to\R$ is a stationary ergodic process if, for some measurable function 
$f:\Omega\to\R$ and some ergodic dynamical system $T:\R^n\times\Omega\to\Omega$, we have 
$$
F(x,\omega)=f(T(x)\omega).
$$
For each fixed $\omega\in\Omega$, we call $f(T(x)\omega)$ a realization of the process $F(x,\omega)=f(T(x)\omega)$.

In the case where $\Omega$ is a compact topological space endowed with a probability measure defined in the 
Borel subsets of $\Omega$, and the dynamical system $T:\R^n\times\Omega\to\Omega$ is a continuous mapping, and moreover 
$f:\Omega\to\R$ is a continuous function, it was proven in~\cite{AH2} that, for a.a. fixed $\omega\in\Omega$, the realization 
$f(T(\cdot)\omega)$ belongs to an ergodic algebra, a concept whose definition we recall subsequently. The validity of this fact 
for general stationary ergodic process was asserted in~\cite{JKO} without proof and precise hypotheses. Nevertheless, in~\cite{Frid}, 
it is shown that for a general stationary ergodic process, if the family of realizations $\Big\{f(T(\cdot)\omega);\,\omega\in\Omega\Big\}$ 
is equicontinuous, then it is possible to reduce this case to the just mentioned case addressed in~\cite{AH2}, thus proving the validity 
of assertion in~\cite{JKO} also in this more general case, that is, for almost all fixed $\omega\in\Omega$, the realization 
$f(T(\cdot)\omega)$ belongs to an ergodic algebra. The latter is a concept introduced by Zhikov and Krivenko in~\cite{ZK} 
(see also~\cite{JKO}). In order to recall its definition, we first need to recall the definition of algebra with mean value 
(w.m.v. for short). The latter is a closed linear subspace $\AA$ of the space of bounded uniformly continuous function in 
$\R^n$ such that: a) $\AA$ is an algebra of functions; b) if $f\in\AA$, then $f(\cdot+k)\in\AA$ for all $k\in\R^n$; c) all elements of $\AA$ has a mean value, that is, 
if $f\in\AA$, the sequence $\{f(\cdot/\ve){\}}_{\ve>0}$ converges, in the duality with $L^\infty$ and compactly supported functions, to the constant $M(f)$, and, in particular, 
$$
M(f)=\lim_{R\to\infty}\frac{1}{|B(x_0,R)|}\int_{B(x_0,R)}f(y)\,dy,
$$
for $x_0\in\R^n$, where $B(x_0,R)$ is the open ball centred at $x_0$ and $|B(x_0,R)|$ denotes its $n-$dimensional Lebesgue measure. Given an algebra w.m.v. $\AA$, 
we consider the semi-norm $[f{]}_2:=M(f^2)^{1/2}$, take the quotient with respect to the equivalence relation $f\sim g \iff [f-g{]}_2=0$, and take the 
completion of the quotient space and denote it by $\BB^2$, the Besicovitch space with exponent $2$ associated with $\AA$. An algebra w.m.v. 
$\AA$ is said to be ergodic if whenever $f\in\BB^2$ satisfies $f(\cdot+k)=f(\cdot)$ in the sense of $\BB^2$ for all $k\in\R^n$, then $f$ is equivalent in $\BB^2$ to 
a constant.

A bounded uniformly continuous function over $\R^n$ is said to be an ergodic function if it belongs to some ergodic algebra w.m.v. $\AA$.
It is worth mentioning that as far as the author could verify in the literature, all examples of ergodic functions presented in the literature are: 
\begin{enumerate}
\item The continuous periodic functions.
\item The almost periodic functions. Given a continuous function $f:\R^n\to\R$ and $\epsilon>0$, we say that $p\in\R^n$ is a $\epsilon-$almost period of $f$ if 
$$
\Big|f(x+p)-f(x)\Big|<\epsilon\quad\text{for all $x\in\R^n$}.
$$
We shall denote the set of $\epsilon-$almost periods of $f$ by $\mathcal{T}(\epsilon,f)$. A continuous functions $f:\R^n\to\R$ is said to be almost periodic if for 
any $\epsilon>0$, the set $\mathcal{T}(\epsilon,f)$ is relatively dense in $\R^n$, that is, there exists $l=l(\epsilon)>0$ such that any cube with side length $l$ 
contains at least one $\epsilon-$period. The set of almost periodic functions on $\R^n$ is denoted by $\AP(\R^n)$. The following important characterizations of 
an almost periodic function are classical and can be found in \cite{B,LZ}.
A continuous function $f$ is in $\AP(\R^n)\iff$ the family of its translates 
$\{f(\cdot+t): t\in\re^n\}$ is pre-compact in the norm of sup$\iff$ $f$ may be uniformly approximated by finite linear combinations of functions in the set $\{\sin(\lambda\cdot x),\,\cos(\lambda\cdot x){\}}_{\lambda\in\re^n}$.
\item The continuous functions with limit at infinite.
\item The Fourier-Stieltjes functions, which are the uniform approximations of the bounded continuous functions $f$ which satisfy $f(x)=\int_{\re^n}e^{ix\cdot y}\,d\mu(y)$, for some complex-value Radon measure in $\re^n$(cf. also~\cite{AH3}). This space is denoted by $\FS(\R^n)$.
\item The weakly almost periodic functions, whose space is denoted by $\mathcal{W}\AP(\R^n)$, is the space of the bounded continuous functions $f$ in $\R^n$, such that de family $\{f(\cdot+t): t\in\re^n\}$ is pre-compact in the weak topology of $C(\re^n)$(the space of the bounded continuous functions). The main properties of this space was studied by Erberlein in \cite{Eb1,Eb2}. 
In~\cite{Ru}, Rudin proved that the inclusion 
$\FS(\re^n)\subset\WAP(\re^n)$ is strict, showing an example of a weakly almost periodic function that cannot be approximated in the sup norm by Fourier-Stieltjes transforms.
\item The weakly* almost periodic functions over $\R^n$. 
In~\cite{Eb2}, Eberlein established the following important decomposition for functions 
$f\in\mathcal{W}\AP(\R^n)$, which allows to write any function as 
$$
f=f_{ap}+f_0,
$$
where $f_{ap}\in\AP(\R^n)$ and $M(|f_0|^2)=0$. This property satisfied by the weakly almost periodic functions served as defining property to a natural broader 
class of functions considered by H. Frid in~\cite{Frid}. This class is denoted by $\mathcal{W}^{*}\AP(\R^n)$ and it is defined as the algebraic sum 
$$
\mathcal{W}^{*}\AP(\R^n):=\AP(\R^n)+\mathcal{N}(\R^n),
$$
where $\mathcal{N}(\R^n)$ is the subspace of the bounded uniformly continuous function $f$ such that $M\left(|f|\right)=0$. Thus, it is clear that 
$\mathcal{W}\AP(\R^n)\subset\mathcal{W}^{*}\AP(\R^n)$.
\end{enumerate}

Therefore, all known examples of ergodic functions over $\R^n$ are weakly* almost periodic functions. Therefore, an important question is 
whether there exist ergodic functions which are not weakly* almost periodic functions. In this article, we shall give an answer to this question by constructing 
a probability compact space and a continuous function such that the majority of its realizations are ergodic functions that are beyond of the weakly* almost periodic settings. 
This will be done through the introduction of a family of $\BUC-$functions $\Omega$ that is equivalent, topologically and measure theoretically to 
$\Omega_0\times \R^n/{\mathbb{Z}^n}$, where $\Omega_0:=\{-1,1\}^{\mathbb{Z}^n}$, and a dynamical system over $\Omega$ that is equivalent to a natural 
shift mapping on $\Omega_0\times \R^n/{\mathbb{Z}^n}$. 
This shows not only the existence of ergodic functions with different behavior from those 
discovered by Eberlein in \cite{Eb1} but also brings out a curious statistical evidence: The weakly almost periodic oscillations may not be so often. From the practical 
point of view, what happens is that in many situations, the stochastic homogenization problems of differential type, can be 
reduced to a homogenization problem of the differential operators whose coefficients are ergodic functions. Thus, the afore-knowledge of 
the nature of the "self-averaging" behavior of these ergodic functions can dictate the complexity of the solution. For example, if you know that almost all realizations 
are of weakly* almost periodic type, an application of the Birkhoff's theorem allows us to conclude that in fact almost all realizations are of almost periodic type. The 
reduction of the stationary ergodic settings to the almost periodic one can simplify a lot the solution of the homogenization's problem(compare for instance the paper \cite{Frid} with \cite{CSW} and 
\cite{I} with \cite{So}).

\section{The construction of the Example}\label{S:2}
We construct the example in dimension $1$ in order to simplify notations. However, as the reader will see, the construction may be easily extended to the 
multi-dimensional case.

Denote the set of integers numbers by $\mathbb{Z}$. On the compact set 
$\{-1,1\}$, we define an elementary radon probability measure $\lambda=\lambda_q$ as follows: The measure of the one-point set 
$\{-1\}$  is equal to $q$, and the measure of the set $\{1\}$ is equal to $1-q$. Here, we consider $0<q<1$. Let $\Omega_0$ be the space product
$$
\Omega_0:=\{-1,1\}^{\mathbb{Z}},
$$ 
that is a compact space by Tychonoff's theorem. The elements of $\Omega_0$ are sequences which assume values in the set 
$\{-1,1\}$. Denote by $\nu=\nu_q$ the product of the elementary measures $\lambda$ and the 
function $\tau:\mathbb{Z}^n\times \Omega_0\to \Omega_0$ by 
$\tau(k,\mathbf{x}):=\{{\mathbf{x}}_{j+k}{\}}_{j\in\mathbb{Z}}$(The shift operator). For simplicity, we shall use $\tau_k{\mathbf{x}}$ to denote $\tau(k,{\mathbf{x}})$. It is 
well known that the function $\tau$ is an ergodic discrete dynamical system over the compact probability space $(\Omega_0,\nu)$ (see, e.g., \cite{RM} pag. 101)
The followings concepts will be useful for us here. 
\begin{itemize}
\item A real sequence $\{\mathbf{x}_k{\}}_{k\in\mathbb{Z}}$ is said to be a periodic sequence if there exists a integer $p>0$ such that 
$$
\mathbf{x}_{k+p}=\mathbf{x}_k,\quad \text{for all $k\in\mathbb{Z}$}.
$$
\item A real sequence $\{\mathbf{x}_k{\}}_{k\in\mathbb{Z}}$ is said to be an almost periodic sequence if to any $\epsilon>0$ there correspondes an 
integer $k_0(\epsilon)$, such that among any $k_0$ consecutive integers there exists an integer $p$(called $\epsilon-$almost period) with the property 
$$
|\mathbf{x}_{k+p}-\mathbf{x}_k|<\epsilon,\quad\text{for all $k\in\mathbb{Z}$}.
$$
\end{itemize}

The next lemma states the fact the almost periodic elements of $\Omega_0$ are periodic.
\begin{lemma}\label{quaseperiodOmega}
Every almost periodic sequence $\mathbf{x}\in\Omega_0$ is a periodic sequence.
\end{lemma}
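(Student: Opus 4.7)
The plan is to exploit the discrete nature of $\Omega_0$: since each $\mathbf{x}_k\in\{-1,1\}$, the difference $|\mathbf{x}_{k+p}-\mathbf{x}_k|$ can only take the values $0$ or $2$. Consequently, for any $\epsilon<2$ (say $\epsilon=1$), the condition $|\mathbf{x}_{k+p}-\mathbf{x}_k|<\epsilon$ in the definition of almost period is equivalent to the equality $\mathbf{x}_{k+p}=\mathbf{x}_k$. In other words, every $\epsilon$-almost period with $\epsilon<2$ is an \emph{exact} period of the sequence.

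Next, I would invoke the relative density of almost periods supplied by the hypothesis. Applying the definition with $\epsilon=1$ yields an integer $k_0=k_0(1)\ge 1$ such that among any $k_0$ consecutive integers there is an almost period, hence (by the first step) an exact period. In particular, applying this to the window of consecutive integers $\{1,2,\ldots,k_0\}$, we obtain a \emph{positive} integer $p\in\{1,\ldots,k_0\}$ satisfying
\[
\mathbf{x}_{k+p}=\mathbf{x}_k \quad\text{for all } k\in\mathbb{Z}.
\]
This is exactly the definition of $\mathbf{x}$ being a periodic sequence with period $p$, which concludes the proof.

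There is essentially no obstacle here — the argument is a direct consequence of the finite (and indeed two-valued) range of the coordinates of $\mathbf{x}$, which forces the almost-period inequality to collapse into an exact equality as soon as $\epsilon<2$. The only minor point to verify is that one can indeed choose the almost period to be positive, but this is immediate by selecting the window of $k_0$ consecutive integers starting at $1$. No use of the measure $\nu$ or of the shift $\tau$ is needed; the lemma is purely combinatorial.
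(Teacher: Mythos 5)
Your proof is correct and follows essentially the same route as the paper's: both exploit the fact that coordinates lie in $\{-1,1\}$, so that $|\mathbf{x}_{k+p}-\mathbf{x}_k|\in\{0,2\}$ and any $\epsilon$-almost period with $\epsilon<2$ is forced to be an exact period. Your additional remark on securing a \emph{positive} period from the window $\{1,\dots,k_0\}$ is a small tidying-up of a point the paper takes for granted, but the argument is the same.
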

\begin{proof}
First, suppose that $\mathbf{x}\in\Omega_0$ is an almost periodic element. Then, 
let $\epsilon\in(0,1)$ and choose a $\epsilon-$almost period $p>0$. Hence, by definition 
$$
|\mathbf{x}_{k+p}-\mathbf{x}_k|<\epsilon,\quad\text{for all $k\in\mathbb{Z}$}.
$$
Now, we have only two possibilities: 
\begin{itemize}
\item $\mathbf{x}_{k+p}=\mathbf{x}_{k}$ for all $k\in\mathbb{Z}$.
\item $\mathbf{x}_{k_0+p}\neq\mathbf{x}_{k_0}$ for some $k_0\in\mathbb{Z}$. Since our sequence assumes its values only 
in the set $\{-1,1\}$, we must have $2=|\mathbf{x}_{k_0+p}-\mathbf{x}_{k_0}|<\epsilon$, which is a contradiction with the 
choise of $\epsilon$. Thus, only the first possibility must happen. This proves our lemma.
\end{itemize}
\end{proof}

Now, take 
$\varphi\in C(\R)$ such that 
\begin{itemize}
\item $\supp \varphi\subset (-1/2,1/2)$. 
\item $0\le \varphi(x)\le 1$ for any $x\in\R$.
\item $\varphi(x)=1$ if and only if $x=0$.
\end{itemize}
The set $\Omega_0$ can be naturally associated with the set 
$$
\left\{\Lambda_{\mathbf{x}}:\R\to \R;\,\Lambda_{\mathbf{x}}(\cdot):=\sum_{m\in \mathbb{Z}}{\mathbf{x}}_m\varphi(\cdot-m),\,\mathbf{x}\in\Omega_0\right\}.
$$
The following lemma will be important for our proposals.
\begin{lemma}\label{R1}
There exists a bijection between the set
$$
\Omega:=\bigg\{\Lambda_{\mathbf{x}}(\cdot+\delta);\, (\mathbf{x},\delta)\in\Omega_0\times \R\bigg\}
$$
and the set $\Omega_0\times \R/\mathbb{Z}$.
\end{lemma}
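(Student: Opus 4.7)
The plan is to construct an explicit bijection $\Psi:\Omega_0\times\R/\Z\to\Omega$, using the canonical representatives in $[0,1)$ for classes in $\R/\Z$, and then to verify surjectivity and injectivity separately. The key observation, which I would establish first, is that $\Lambda_{\mathbf{x}}$ takes values in $[-1,1]$, attains $\pm 1$ exactly at the integers, and satisfies $\Lambda_{\mathbf{x}}(m)=\mathbf{x}_m$ for every $m\in\Z$. Both facts follow from $\supp\varphi\subset(-1/2,1/2)$ together with $\varphi(x)=1\iff x=0$: for integer $m$ the only nonzero summand in $\sum_k\mathbf{x}_k\varphi(m-k)$ is the one with $k=m$, and $|\Lambda_{\mathbf{x}}|$ can reach $1$ only where some summand $\varphi(\cdot-k)$ equals $1$.

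Next I would define $\Psi(\mathbf{x},[\delta']):=\Lambda_{\mathbf{x}}(\cdot+\delta')$ using the unique $\delta'\in[0,1)$ in the class. To see that $\Psi$ exhausts $\Omega$, take an arbitrary $\Lambda_{\mathbf{x}}(\cdot+\delta)\in\Omega$ with $\delta\in\R$, write $\delta=k+\delta'$ with $k\in\Z$ and $\delta'\in[0,1)$, and compute
\[
\Lambda_{\mathbf{x}}(y+\delta)=\sum_{m\in\Z}\mathbf{x}_m\varphi(y+\delta'+k-m)=\sum_{m'\in\Z}\mathbf{x}_{m'+k}\varphi(y+\delta'-m')=\Lambda_{\tau_k\mathbf{x}}(y+\delta'),
\]
by the substitution $m'=m-k$ and the definition $(\tau_k\mathbf{x})_{m'}=\mathbf{x}_{m'+k}$. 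Hence $\Lambda_{\mathbf{x}}(\cdot+\delta)=\Psi(\tau_k\mathbf{x},[\delta'])$, proving surjectivity.

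For injectivity, suppose $\Lambda_{\mathbf{x}}(\cdot+\delta_1)=\Lambda_{\mathbf{y}}(\cdot+\delta_2)$ on $\R$ with $\delta_1,\delta_2\in[0,1)$. By the peak characterization, both sides attain $\pm 1$ precisely on $\Z-\delta_1$ and $\Z-\delta_2$ respectively, so these sets coincide and thus $\delta_1-\delta_2\in\Z$; since both lie in $[0,1)$, I conclude $\delta_1=\delta_2$. The identity then reduces to $\Lambda_{\mathbf{x}}=\Lambda_{\mathbf{y}}$, and evaluating at each integer $m$ gives $\mathbf{x}_m=\Lambda_{\mathbf{x}}(m)=\Lambda_{\mathbf{y}}(m)=\mathbf{y}_m$, so $\mathbf{x}=\mathbf{y}$. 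I do not anticipate a serious obstacle here: the argument is essentially bookkeeping showing that the two ambiguities in the pair $(\mathbf{x},\delta)$ — translating $\delta$ by an integer and reshifting $\mathbf{x}$ accordingly — are exactly what the quotient $\R/\Z$ absorbs. The only point requiring care is the peak-location argument, which relies in an essential way on all three assumed properties of $\varphi$.
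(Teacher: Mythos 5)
Your proof is correct and follows essentially the same route as the paper: you build the explicit correspondence (you construct the inverse of the paper's map $H$) and use the three properties of $\varphi$ to pin down $\delta$ modulo $\mathbb{Z}$ and then recover $\mathbf{x}$ by evaluating at integers. Your ``peak set'' argument ($|\Lambda_{\mathbf{x}}|=1$ exactly on $\mathbb{Z}$) is a clean repackaging of the paper's step of evaluating at $t=0$ and forcing $\varphi(\theta-m_0)=1$; both rest on the same observation that at most one summand is nonzero at each point.
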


\begin{proof}
1. First, it is important to establish the following remark: If $\Lambda_{\mathbf{x}}(t+\delta_1)=\Lambda_{\mathbf{y}}(t+\delta_2)$ for all $t\in \R$, then, the properties below hold:
\begin{itemize}
\item ${\tau}_{\left\lfloor \delta_1 \right\rfloor} \mathbf{x}=\tau_{\left\lfloor \delta_2 \right\rfloor} \mathbf{y}$.
\item $\delta_1-\left\lfloor \delta_1 \right\rfloor=\delta_2-\left\lfloor \delta_2 \right\rfloor$,
\end{itemize}
where $\lfloor x\rfloor$ denotes the unique number in $\mathbb{Z}$ such that $x-\lfloor x\rfloor \in [0,1)$. Here, we shall identify the $1-$dimensional 
torus $\R/\mathbb{Z}$ with the interval $[0,1)$. 
First, we claim that $\delta_1-\delta_2\in \mathbb{Z}$. Indeed, define 
$\theta:=\delta_1-\delta_2-\left\lfloor \delta_1-\delta_2\right\rfloor\in [0,1)$.  Since $\Lambda_{\mathbf{x}}(t+\delta_1)=\Lambda_{\mathbf{y}}(t+\delta_2)$ for all $t\in \R$, 
changing $t$ by $t-\delta_1$ and using the definition of $\Lambda_{\mathbf{x}}$ and $\Lambda_{\mathbf{y}}$, we get:
\begin{eqnarray*}
&& \sum_{m\in \mathbb{Z}}\mathbf{x}_m\varphi(t-m)=\sum_{m\in \mathbb{Z}}\mathbf{y}_m\varphi\left(t-m+\delta_2-\delta_1\right)=
\sum_{m\in \mathbb{Z}}\mathbf{y}_m\varphi\left(t-m+\theta+\left\lfloor \delta_1-\delta_2\right\rfloor\right)\\
&&\qquad\qquad =\sum_{m\in \mathbb{Z}}(\tau_{\left\lfloor \delta_1-\delta_2 \right\rfloor}\mathbf{y})_m\,\varphi\left(t-m+\theta \right),\quad \text{for all $t\in\R$}.
\end{eqnarray*}
Taking $t=0$, we have that there exists an unique $m_0\in\mathbb{Z}$ such that 
\begin{eqnarray*}
&& \mathbf{x}_0= \sum_{m\in \mathbb{Z}}\mathbf{x}_m\varphi(-m)=\sum_{m\in \mathbb{Z}}(\tau_{\left\lfloor \delta_1-\delta_2 \right\rfloor}\mathbf{y})_m\,\varphi\left(-m+\theta \right)\nonumber\\
&&\qquad\qquad 
=(\tau_{\left\lfloor \delta_1-\delta_2 \right\rfloor}\mathbf{y})_{m_0}\,\varphi\left(\theta-m_0 \right).
\end{eqnarray*}
Taking into account that $\mathbf{x}_0,(\tau_{\left\lfloor \delta_1-\delta_2 \right\rfloor}\mathbf{y})_{m_0}\in \{-1,1\}$, we must have 
$\varphi\left(\theta-m_0 \right)=1$. Thus, by the conditions on the function $\varphi$, it implies that $m_0=\theta$.  Thus, $\theta\in [0,1)\cap \mathbb{Z}$ which gives that 
$\theta=0$ and the claim is proved. 
From the claim, we have that $\delta_1-\left\lfloor \delta_1 \right\rfloor=\theta=\delta_2-\left\lfloor \delta_2 \right\rfloor$. Moreover, 
\begin{eqnarray}\label{E1}
&&\Lambda_{\mathbf{x}}(t+\delta_1)= \sum_{m\in \mathbb{Z}}\mathbf{x}_m\,\varphi\left(t+\delta_1-m\right)=\sum_{m\in \mathbb{Z}^n}\mathbf{x}_m\,\varphi\left(t+\theta+\left\lfloor \delta_1 \right\rfloor-m\right)\nonumber\\
&&\qquad\qquad
=\sum_{m\in \mathbb{Z}}(\tau_{\left\lfloor \delta_1\right\rfloor}\mathbf{x})_{m}\,\varphi\left(t+\theta-m \right),\,\text{for all $t\in\R$},
\end{eqnarray}
and the same holds if we change $\mathbf{x}$ by $\mathbf{y}$ and $\delta_1$ by $\delta_2$. Taking into account that $\Lambda_{\mathbf{x}}(t+\delta_1)=\Lambda_{\mathbf{y}}(t+\delta_2)$ for all $t\in\R$, we get:
$$
\sum_{m\in \mathbb{Z}}(\tau_{\left\lfloor \delta_1\right\rfloor}\mathbf{x})_{m}\,\varphi\left(t+\theta-m \right)=
\sum_{m\in \mathbb{Z}}(\tau_{\left\lfloor \delta_2\right\rfloor}\mathbf{x})_{m}\,\varphi\left(t+\theta-m \right),\,\text{for all $t\in\R$}.
$$
Therefore, changing $t$ by $t-\theta$ in the above equality, we have
$$
(\tau_{\left\lfloor \delta_1\right\rfloor}\mathbf{x})_{m}=(\tau_{\left\lfloor \delta_2\right\rfloor}\mathbf{y})_{m},\,\text{for all $m\in\mathbb{Z}$},
$$
which establishes the remark.

2. Define $H:\Omega\to \Omega_0\times [0,1)$ by
$$
H(\Lambda_{\mathbf{x}}(\cdot+\delta))=\left(\tau_{\left\lfloor \delta \right\rfloor}\mathbf{x},\delta-\left\lfloor \delta \right\rfloor\right).
$$
By the step 1, the function $H$ is well defined and is onto.

3. We claim that the function $H$ is one-to-one. In order to show this, let $\mathbf{x},\mathbf{y}\in\Omega_0$ and $\delta_1,\delta_2\in\R$ be such that 
$$
H\left(\Lambda_{\mathbf{x}}(\cdot+\delta_1)\right)=H\left(\Lambda_{\mathbf{y}}(\cdot+\delta_2)\right).
$$
By the definition of $H$, we have:
\begin{itemize}
\item ${\tau}_{\left\lfloor \delta_1 \right\rfloor} \mathbf{x}=\tau_{\left\lfloor \delta_2 \right\rfloor} \mathbf{y}$.
\item $\delta_1-\left\lfloor \delta_1 \right\rfloor=\delta_2-\left\lfloor \delta_2 \right\rfloor=\theta$.
\end{itemize} 
Now, using these relations in definition of $\Lambda_{\mathbf{x}}$ and the relation~$\eqref{E1}$, we have the following equality:
\begin{eqnarray*}
&&\Lambda_{\mathbf{x}}(t+\delta_1)=\sum_{m\in \mathbb{Z}}(\tau_{\left\lfloor \delta_1\right\rfloor}\mathbf{x})_{m}\,\varphi\left(t+\theta-m \right)\\
&&\qquad =\sum_{m\in \mathbb{Z}}(\tau_{\left\lfloor \delta_2\right\rfloor}\mathbf{y})_{m}\,\varphi\left(t+\theta-m \right)=\Lambda_{\mathbf{y}}(t+\delta_2),\,\text{for all $t\in\R$}.
\end{eqnarray*}

\end{proof}

By Lemma~\ref{R1}, the set $\Omega$ inherits the probabilistic and the topologicals features of the space $\Omega_0\times [0,1)$  in a natural way.  Let $\mu=\mu_q$ be 
the probability measure on $\Omega$ associated to the measure on $\Omega_0\times [0,1)$ defined as the product of measure $\nu$ on $\Omega_0$ and the Lebesgue 
measure on $[0,1)$. From now on, we shall denote by $\Omega_1$ the space $\Omega_0\times [0,1)$.
\begin{lemma}\label{R1.1}
Let $\Big(\Omega,\mu\Big)$ be the probability space constructed above and 
$T:\R\times\Omega\to\Omega$ be the shift operator defined on $\Omega$, that is, $T(z,\omega):=\omega(\cdot+z)$. Then, we have the followings properties: 
\begin{enumerate}
\item[(i)] $$
T(z)=H^{-1}\circ S(z)\circ H,
$$
where $S(z):\Omega_1\to \Omega_1$ is given by 
\begin{equation}\label{SisDin2}
S(z)(\mathbf{x},\theta):=\Big({\tau}_{\left\lfloor z+\theta \right\rfloor} \mathbf{x},z+\theta-\left\lfloor z+\theta \right\rfloor\Big)\,(z\in\R).
\end{equation}
\item[(ii)] The function $S(z):\Omega_1\to \Omega_1$ defined by \eqref{SisDin2} is an ergodic dynamical system with respect to 
the product of measure $\nu$ on $\Omega_0$ and the Lebesgue measure on $[0,1)$. 
\item[(iii)] The mapping $T:\R\times\Omega\to\Omega$ is an ergodic dynamical system. 
\end{enumerate}
\end{lemma}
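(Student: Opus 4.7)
The plan is to treat the three parts in sequence: (i) is a direct conjugation calculation, (ii) contains the main analytic content (group property, measure preservation, and ergodicity for the skew system $S$ on $\Omega_1=\Omega_0\times[0,1)$), and (iii) then follows by transporting (ii) along the bijection $H$, which by construction is measure-preserving, since the measure $\mu$ on $\Omega$ was defined precisely as the pushforward of $\nu\otimes dt$ under $H^{-1}$.

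For part (i), I fix $\omega=\Lambda_{\mathbf{x}}(\cdot+\delta)$ and decompose $\delta=n+\theta$ with $n=\lfloor\delta\rfloor$, $\theta\in[0,1)$, so that $H(\omega)=(\tau_n\mathbf{x},\theta)$. Since $T(z)\omega=\Lambda_{\mathbf{x}}(\cdot+z+\delta)$ and since $\lfloor z+\delta\rfloor=n+\lfloor z+\theta\rfloor$, $\{z+\delta\}=\{z+\theta\}$, applying $H$ yields $(\tau_{n+\lfloor z+\theta\rfloor}\mathbf{x},\{z+\theta\})$, which coincides with $S(z)(\tau_n\mathbf{x},\theta)$ because $\tau$ is a $\Z$-action. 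For part (ii), the group property $S(0)=\mathrm{Id}$ is immediate, and the cocycle identity $S(z_1)S(z_2)=S(z_1+z_2)$ reduces to the elementary fact $\lfloor z_1+\{z_2+\theta\}\rfloor+\lfloor z_2+\theta\rfloor=\lfloor z_1+z_2+\theta\rfloor$. Measure preservation of $S(z)$ for $z\in[0,1)$ follows by splitting $\Omega_1$ into $\Omega_0\times[0,1-z)$, on which $S(z)$ acts by pure $\theta$-translation, and $\Omega_0\times[1-z,1)$, on which it combines a $\theta$-translation with a single application of the $\nu$-preserving map $\tau_1$; the group law then extends this to all $z\in\R$. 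Joint measurability in $(z,\mathbf{x},\theta)$ is direct from the defining formula.

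Ergodicity in (ii) is the crux. The observation that unlocks it is $S(1)(\mathbf{x},\theta)=(\tau_1\mathbf{x},\theta)$, because $\lfloor 1+\theta\rfloor=1$ and $\{1+\theta\}=\theta$ for $\theta\in[0,1)$. Given $f\in L^2(\Omega_1)$ that is $S(z)$-invariant for every $z\in\R$, invariance under $S(1)$ together with Fubini and the ergodicity of $\tau$ on $(\Omega_0,\nu)$ (cited from the paper) forces $f(\mathbf{x},\theta)=g(\theta)$ almost everywhere, for some $g\in L^2([0,1))$. Invariance under $S(z)$ for general $z$ then reads $g(\{z+\theta\})=g(\theta)$ a.e., i.e.\ $g$ is invariant under every rotation of $[0,1)\cong\R/\Z$, and a standard Fourier-series argument forces $g$ to be a.e.\ constant. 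Part (iii) is then a clean transport: since $H$ is measure-preserving and $T(z)=H^{-1}\circ S(z)\circ H$ by (i), each of the group law, $\mu$-invariance, joint measurability, and ergodicity of $T$ is inherited from the corresponding property of $S$ established in (ii); in particular, a $T$-invariant measurable set pushes forward to an $S$-invariant set, whose measure lies in $\{0,1\}$.

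I do not expect any individual step to be a genuine obstacle; the main source of friction is the careful bookkeeping of integer and fractional parts of $z+\theta$, $\{z_2+\theta\}$, and $z+\delta$, which recurs in the conjugation of (i), the cocycle for $S$ in (ii), and the splitting used for measure preservation. Conceptually, recognizing $(S(z))_{z\in\R}$ as the suspension flow of the ergodic $\Z$-system $(\Omega_0,\nu,\tau)$ over the constant roof function $1$ makes ergodicity a special case of the standard suspension result, and the argument sketched above is simply that standard proof adapted to this concrete setting.
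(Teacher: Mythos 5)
Your proposal is correct and follows essentially the same route as the paper: part (i) by the same floor/fractional-part conjugation computation, part (ii) by reducing ergodicity to the ergodicity of the shift $\tau$ on $(\Omega_0,\nu)$ (via integer times) followed by rotation-invariance in $\theta$, and part (iii) by transporting everything along the measure-preserving bijection $H$. The only cosmetic differences are that you verify measure preservation by splitting $[0,1)$ at $1-z$ instead of the paper's Fubini/change-of-variables computation, and you add the (apt but unused by the paper) remark that $S$ is the suspension flow of $(\Omega_0,\nu,\tau)$ with constant roof $1$.
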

\begin{proof}
1. Writing $\omega(\cdot)=\Lambda_{\mathbf{x}}(\cdot+\delta)$ for some $\mathbf{x}\in\Omega_0$ and some $\delta\in\R$, we have by definition of $H$ in 
the proof of the lemma \ref{R1}(see step 2):
$$
H(T(z)\omega)=H\left(\omega(\cdot+z)\right)=H\left(\Lambda_{\mathbf{x}}(\cdot+\delta+z)\right)=\Big(\tau_{\left\lfloor z+\delta \right\rfloor}
\mathbf{x},z+\delta-\left\lfloor z+\delta \right\rfloor\Big).
$$
On the other hand, 
\begin{eqnarray*}
&& S(z)\left(H(\omega)\right)=S(z)\left({\tau}_{\left\lfloor \delta \right\rfloor}\mathbf{x},\delta-\left\lfloor \delta \right\rfloor\right)\\
&&\quad = \Big({\tau}_{\left\lfloor z+\delta - \left\lfloor \delta \right\rfloor \right\rfloor+ \left\lfloor \delta \right\rfloor}\mathbf{x}, 
z+\delta - \left\lfloor \delta \right\rfloor-\left\lfloor z+\delta - \left\lfloor \delta \right\rfloor \right\rfloor\Big)\\
&&\quad= \Big({\tau}_{\left\lfloor z+\delta \right\rfloor}\mathbf{x}, z+\delta-\left\lfloor z+\delta \right\rfloor\Big)=H(T(z)\omega),
\end{eqnarray*}
where we have used the property $\left\lfloor t+k \right\rfloor=\left\lfloor t\right\rfloor+k$ for all $t\in\R$ and $k\in\mathbb{Z}$. This proves 
the item $(i)$.

2. By the item $(i)$, we have that $S(z)=H\circ T(z)\circ H^{-1}$. Since $T(z)$ is a shift operator, then it is clear that 
$T(0)=I_{\Omega}$ and $T(z_1+z_2)=T(z_1)\circ T(z_2)$(group property). Hence, the same happens with the function $S(z)$, that is, 
$S(z_1+z_2)=S(z_1)\circ S(z_2)$.

3. If $E\subset \Omega_1$ is a Borel set, then same occurs with $S(z)(E)$ for any $z\in\R$. This can be seen 
by noting that the collection of all sets having this property forms a $\sigma-$algebra. Moreover, taking into account that 
the measurable sets of $\Omega_0$ are sent to measurable sets by the transformation $\tau_k$ for any $k\in\mathbb{Z}$ and the 
same happens with the space $[0,1)$ with respect to transformation 
$\cdot+z-\left\lfloor \cdot+z\right\rfloor$ for any $z\in\R$, we can deduce that the rectangles are also sent to  
measurable sets of $\Omega_1$ by the mapping $S(z)$. Therefore, $S(z)(E)$ is a Borel set if $E$ is a Borel set for any $z\in\R$. 
Let $\mathcal{P}$ be the measure defined as the product of the measure $\nu$ on $\Omega_0$ with the Lebesgue measure $d\theta$ on $[0,1)$. 
We now shall show that $\mathcal{P}\left(S(z)(E)\right)=\mathcal{P}(E)$ for any Borel set $E\subset \Omega_1$ 
and any $z\in\R$. Take a Borel set $E\subset \Omega_1$ and $z\in\R$. Let $1_{E}(\cdot)$ be the 
characteristic function of the set $E$. Since $1_{E}\Big(S(-z)(\mathbf{x},\theta)\Big)=1_{S(z)(E)}(\mathbf{x},\theta)$ 
and $S(-z)(E)$ is a Borel set, then the function $(\mathbf{x},\theta)\mapsto 1_{E}\Big(S(z)(\mathbf{x},\theta)\Big)$ is 
measurable. Hence, by the Fubini Theorem
\begin{eqnarray*}
&&\mathcal{P}\Big(S(z)(E)\Big)=\int_{\Omega_0\times[0,1)}1_{S(z)(E)}(\mathbf{x},\theta)\,d\mathcal{P}(\mathbf{x},\theta)=
\int_{\Omega_0\times[0,1)}1_{E}\Big(S(-z)(\mathbf{x},\theta)\Big)\,d\mathcal{P}(\mathbf{x},\theta)\\
&&\qquad=\int_{\Omega_0\times[0,1)}1_{E}\Big({\tau}_{\left\lfloor \theta - z \right\rfloor}{\mathbf{x}},
\theta-z-\left\lfloor\theta -z \right\rfloor\Big)\,d\mathcal{P}(\mathbf{x},\theta)\\
&&\qquad=\int_{[0,1)}\Bigg\{\int_{\Omega_0}
1_{E}\Big({\tau}_{\left\lfloor \theta - z \right\rfloor}{\mathbf{x}},
\theta-z-\left\lfloor\theta -z \right\rfloor\Big)\,d\nu(\mathbf{x})\Bigg\}\,d\theta\\
&&\qquad=\int_{[0,1)}\Bigg\{\int_{\Omega_0}1_{E}\Big(\mathbf{x},
\theta-z-\left\lfloor\theta -z \right\rfloor\Big)\,d\nu(\mathbf{x})\Bigg\}\,d\theta\\
&&\qquad=\int_{\Omega_0\times[0,1)}1_E(\mathbf{x},\theta)\,d\nu(\mathbf{x})d\theta=\mathcal{P}(E).
\end{eqnarray*}

4. We claim that the function $(z,\mathbf{x},\theta)\mapsto f\left(S(z)(\mathbf{x},\theta)\right)$ 
defined on the cartesian product $\R\times\Omega_1$ is measurable for 
any measurable function $f:\Omega_1\to\R$. We can prove this claim reasoning as follows: 
Using an approximation argument, it is enough to consider $f(\cdot)=1_E(\cdot)$, where $E$ is a 
measurable set of $\Omega_1$ and $1_E(\cdot)$ is the characteristic function of the set $E$. Then, 
we observe that the class of all measurable sets $E\subset\Omega_1$ such that the function 
$(z,\mathbf{x},\theta)\mapsto 1_{E}\left(S(z)(\mathbf{x},\theta)\right)$ is measurable on 
$\R\times\Omega_1$ is a $\sigma-$algebra. Finally, we will finish the proof of the 
claim by showing that the rectangles of $\Omega_1$ belongs to this $\sigma-$algebra. For this, 
write $E=E_1\times E_2$, where $E_1\subset \Omega_0$ and $E_2\subset [0,1)$ are measurable sets. Note that 
$$
1_{E}\left(S(z)(\mathbf{x},\theta)\right)=1_{E_1}\left({\tau}_{\left\lfloor \theta + z \right\rfloor}\mathbf{x}\right)\,
1_{E_2}\left(z+\theta-\left\lfloor z+\theta \right\rfloor\right)=f_1\left(z,\theta,\mathbf{x}\right)\,f_2\left(z,\theta,\mathbf{x}\right),
$$
with obvious notations for $f_1, f_2$.

Since the values of the function $f_1$ are in the set $\{0,1\}$, it is sufficient to verify that $f_1^{-1}(1)$ is a measurable set 
in $\R\times[0,1)\times\Omega_0$. Note that $(z,\theta,\mathbf{x})\in f_1^{-1}(1)$ if and only if 
$\mathbf{x}\in {\tau}_{-\left\lfloor \theta + z \right\rfloor}\left(E_1\right)$. Furthermore, 
the value of $\left\lfloor \theta + z \right\rfloor$ can be $\left\lfloor z \right\rfloor$ or 
$\left\lfloor z \right\rfloor+1$ depending on the case if the sum of the fractional part of $z$ with 
$\theta$ is smaller or grater than $1$. Due to this, we partitioned the set $\R\times[0,1)$ as follows:
$$
\R\times[0,1)=\Big(\bigcup_{k\in\mathbb{Z}}V+(k,0)\Big)\bigcup \Big(\bigcup_{k\in\mathbb{Z}}V^c+(k,0)\Big),
$$
where $V:=\{(z,\theta)\in [0,1)^2;\,z+\theta<1\}$ and $V^c:=\{(z,\theta)\in [0,1)^2;\,z+\theta\ge 1\}$. Thus, 
taking into account this decomposition, it is easily seen that 
$$
f_1^{-1}(1)=\Big(\bigcup_{k\in\mathbb{Z}}\{V+(k,0)\}\times \tau_{-k}(E_1)\Big)\bigcup
\Big(\bigcup_{k\in\mathbb{Z}}\{V^c+(k,0)\}\times \tau_{-k-1}(E_1)\Big),
$$
which provides the measurability of the function $f_1$. Similar procedures can be made 
for the function $f_2$. This completes the proof of the claim.

5. In this step, we shall show that the dynamical system $\{S(z){\}}_{z\in\R}$ is ergodic. Let 
$f:\Omega_1\to\R$ be an invariant function, that is, 
$f\left(S(z)(\mathbf{x},\theta)\right)=f(\mathbf{x},\theta)$ for all $z\in\R$ and for 
$\mathcal{P}-$almost everywhere $(\mathbf{x},\theta)\in\Omega_1$. By definition 
of $S(z)$, we have 
$$
f\left({\tau}_{\left\lfloor z+\theta \right\rfloor} \mathbf{x},z+\theta-\left\lfloor z+\theta \right\rfloor\right)=f(\mathbf{x},\theta),
\quad\text{$\mathcal{P}-$almost everywhere $(\mathbf{x},\theta)$ and all $z\in\R$}.
$$
Taking $z=k$, we obtain
$$
f({\tau}_k\mathbf{x},\theta)=f(\mathbf{x},\theta),\quad\text{$\mathcal{P}-$almost everywhere $(\mathbf{x},\theta)$ and all $k\in\mathbb{Z}$}.
$$
Since the mapping $\{\tau{\}}_{k\in\mathbb{Z}}$ is ergodic, we deduce that the function $f$ does not depend of the first variable. Using this 
in the second equation above, we have
$$
f\left(z+\theta-\left\lfloor z+\theta \right\rfloor\right)=f(\theta),\quad\text{for all $z\in\R$ and almost everywhere $\theta\in[0,1)$}.
$$
As the mapping $(z,\theta)\mapsto z+\theta-\left\lfloor z+\theta \right\rfloor\in [0,1)$ is ergodic, we see that the function $f$ also is  
independent of the second variable. Thus, $f$ is equivalent to a constant. The item $(iii)$ is a direct consequence of the itens $(i)$ and 
$(ii)$. Hence, we finish the proof of the lemma.
\end{proof}

To sum up, the elements of the space $\Omega$ are uniformly continuous functions $\omega$ defined in $\R$ and the shift operator 
$T(z,\omega):=\omega(\cdot+z)$ is an ergodic dynamical system by the Lemma \ref{R1.1}. Therefore, by the arguments in the proof of the Theorem $3.1$ of 
\cite{AH2}, it follows that given any continuous function $f:\Omega\to\R$, for a.a. $\omega\in\Omega$, $f\left(T(\cdot)\omega\right)$ belongs to an ergodic algebra. 
Defining the function $f:\Omega\to \R$ by $f(\omega)=\omega(0)$, it is easy to see that $f$ is 
continous on $\Omega$ and its realization by the dynamical system $T$ satisfies $f\left(T(\cdot)\omega\right)=\omega(\cdot)$. Hence, we conclude that almost all elements of 
$\Omega$ are ergodic functions. Therefore, at this point, a natural question arise: What is the amount of the functions in the probability space $\Omega$ that are periodic, almost-periodic, 
weakly almost periodic or more generally almost periodic plus $L^1-$ mean zero?  
The aim of the next lemma is to reckon the amount of the periodic functions in $\Omega$. 
\begin{lemma}\label{R2}
The $\mu$-measure of the set 
$$
\bigg\{\omega\in\Omega;\,\text{$\omega(\cdot)$ is a periodic function}\bigg\}
$$
is null. 
\end{lemma}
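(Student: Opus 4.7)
The plan is to transfer the question to the model space $\Omega_1=\Omega_0\times[0,1)$ via the bijection $H$ from Lemma~\ref{R1}, under which $\mu$ pulls back by construction to the product measure $\mathcal{P}=\nu\times d\theta$. Writing any $\omega\in\Omega$ uniquely as $\omega=\Lambda_{\mathbf{y}}(\cdot+\theta)$ with $H(\omega)=(\mathbf{y},\theta)$, I aim to show that $\omega$ is periodic if and only if $\mathbf{y}$ is a periodic sequence in $\Omega_0$. Granting this, the periodic elements of $\Omega$ correspond under $H$ to $P\times[0,1)$, where $P\subset\Omega_0$ is the set of periodic sequences, and Fubini reduces the problem to showing $\nu(P)=0$.

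For the reduction, suppose $\omega(t+p)=\omega(t)$ for every $t\in\R$ with some $p>0$. Then $\Lambda_{\mathbf{y}}(t+\theta+p)=\Lambda_{\mathbf{y}}(t+\theta)$ for all $t$, and the remark in step~1 of the proof of Lemma~\ref{R1}, applied with $\mathbf{x}=\mathbf{y}$, $\delta_1=\theta+p$, $\delta_2=\theta$, yields $\tau_{\lfloor\theta+p\rfloor}\mathbf{y}=\tau_{\lfloor\theta\rfloor}\mathbf{y}=\mathbf{y}$ together with $\theta+p-\lfloor\theta+p\rfloor=\theta$. Since $\theta\in[0,1)$ the latter forces $p=\lfloor\theta+p\rfloor\in\mathbb{Z}$, whence $\tau_p\mathbf{y}=\mathbf{y}$. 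Conversely, if $\tau_p\mathbf{y}=\mathbf{y}$ for some positive integer $p$, a reindexing of the defining sum shows that $\Lambda_{\mathbf{y}}$, and hence $\omega$, is periodic of period $p$.

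It remains to compute $\nu(P)$. The set $P$ is the countable union $\bigcup_{p\geq 1}P_p$, where $P_p:=\{\mathbf{y}\in\Omega_0:\tau_p\mathbf{y}=\mathbf{y}\}$ is finite of cardinality $2^p$, since each $p$-periodic sequence is determined by its values on $\{0,1,\dots,p-1\}$. Thus $P$ is countable, so it suffices to observe that every singleton $\{\mathbf{y}\}\subset\Omega_0$ has $\nu$-measure zero: writing $\{\mathbf{y}\}=\bigcap_n C_n$ with $C_n:=\{\mathbf{z}\in\Omega_0:\mathbf{z}_m=\mathbf{y}_m\ \text{for}\ |m|\leq n\}$ and using $0<q<1$ gives $\nu(C_n)\leq\max(q,1-q)^{2n+1}\to 0$. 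Consequently $\nu(P)=0$ and $\mu(\{\omega\in\Omega:\omega\ \text{is periodic}\})=\nu(P)\cdot 1=0$. The only subtle point is the first reduction, in particular ruling out non-integer periods, but the heavy lifting has already been done inside the proof of Lemma~\ref{R1}, which we merely invoke.
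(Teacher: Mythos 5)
Your proof is correct and follows essentially the same route as the paper: reduce via the identification of Lemma~\ref{R1} to showing that periodic elements of $\Omega$ correspond to $P\times[0,1)$ with $P$ the set of periodic sequences, observe that $P=\bigcup_p P_p$ is countable since $|P_p|=2^p$, and conclude because $\nu$ is atomless. Your version is slightly more complete in that you record the converse direction of the correspondence and spell out the cylinder-set estimate showing singletons are $\nu$-null, which the paper only asserts.
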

\begin{proof}
1. Let $\omega\in\Omega$ be a periodic function. Thus, there exists a sequence $\mathbf{x}\in\Omega_0$, $\delta\in\R$ and a number 
$p\in {\R}_{+}$(we can assume that 
$p>1$) such that $\omega(\cdot)=\Lambda_{\mathbf{x}}(\cdot+\delta)$ and 
$$
\Lambda_{\mathbf{x}}(\cdot+k\, p+\delta)=\Lambda_{\mathbf{x}}(\cdot+\delta),
\quad\text{on $\R$ and for all $k\in \mathbb{Z}$}.
$$
Due to the identification given by lemma~\ref{R1}, we must have:
\begin{itemize}
\item ${\tau}_{\left\lfloor p\,+\delta \right\rfloor} \mathbf{x}=\tau_{\left\lfloor \delta \right\rfloor}\mathbf{ x}$.
\item $p\,+\delta-\left\lfloor p+\delta\right\rfloor=\delta-\left\lfloor \delta \right\rfloor$.
\end{itemize}
Hence, from the second relation, we deduce that $p\in\mathbb{Z}$ and from the first we get 
$$
\tau_{p}\left(\tau_{\left\lfloor \delta \right\rfloor}\mathbf{x}\right)=\tau_{\left\lfloor \delta \right\rfloor}\mathbf{x},
$$
that is, the sequence $\tau_{\left\lfloor \delta \right\rfloor}\mathbf{x}$ is also periodic with period 
$[0,p)\cap \mathbb{Z}$. Thus,
$$
\bigg\{\omega\in\Omega;\,\text{$\omega(\cdot)$ is a periodic function}\bigg\}=\Omega_0^{\rm{Per}}\times [0,1),
$$
where $\Omega_0^{\rm{Per}}:=\left\{\mathbf{x}\in\Omega_0;\,\text{$\mathbf{x}$ is a periodic sequence}\right\}$. Then, it is enough to prove that the set 
$\Omega_0^{\rm{Per}}$ has null measure. 

2. We claim that the set $\Omega_0^{\rm{Per}}$ is at most countable.  Indeed, observe that 
$$
\Omega_0^{\rm{Per}}=\cup_{p\in {\mathbb{Z}}_{+}}\big\{\mathbf{x}\in\Omega_0;\,\tau_{p} \mathbf{x}=\mathbf{x}\big\}.
$$

Moreover, given $p\in {\mathbb{Z}}_{+}$ define 
$$
\mathcal{C}:=\{-1,1\}^{\mathbb{Z}\cap [0,p)},
$$
be the set of finite sequence $\alpha=(\alpha_m)$ with $m\in\mathbb{Z} \cap [0,p)$ which assumes its values in the set $\{-1,1\}$. Hence, we can note that 
$$
\big\{x\in\Omega_0;\,\tau_{p} \mathbf{x}=\mathbf{x},\quad \big\}=\cup_{\alpha\in {\mathcal{C}}}\big\{x^{\alpha}\big\},
$$
where $x^{\alpha}\in \Omega_0$ is such that $x_m^{\alpha}=\alpha_{\overline{m}^p}$.  Since $\sharp \mathcal{C}=2^{p}$, the claim is verified. 

3.  The proof of the lemma is completed by noting that the measure $\nu$ attributes value zero to any point in $\Omega_0$, which implies that any countable set in 
$\Omega_0$ has $\nu$-measure zero. 

\end{proof}

In the next Lemma, we analyze the amount of elements in the set $\Omega$ that are almost-periodic functions.  

\begin{lemma}\label{R3}
The $\mu$-measure of the set 
$$
\bigg\{\omega\in\Omega;\,\text{$\omega(\cdot)$ is an almost-periodic function}\bigg\}
$$
is null. 
\end{lemma}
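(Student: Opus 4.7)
The plan is to reduce the present statement to Lemma~\ref{R2} via a ``rigidity'' of the profile $\varphi$: I will show that if $\omega\in\Omega$ is almost periodic on $\R$, then in any representation $\omega=\Lambda_{\mathbf{x}}(\cdot+\delta)$ the sequence $\mathbf{x}$ is necessarily periodic. Combined with the identification from Lemma~\ref{R1}, this places the set of almost-periodic elements of $\Omega$ inside $\Omega_0^{\mathrm{Per}}\times[0,1)$, which is $\mu$-null by (the proof of) Lemma~\ref{R2}.

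To carry out this reduction, I would sample $\omega$ at the lattice points $t_m:=m-\delta$, where the support condition $\supp\varphi\subset(-1/2,1/2)$ together with $\varphi(0)=1$ gives $\omega(t_m)=\mathbf{x}_m\in\{-1,1\}$. Given an $\epsilon$-almost period $p$ of $\omega$ with $\epsilon\in(0,1)$, write $p=n+\eta$ with $n\in\mathbb{Z}$ nearest to $p$ and $|\eta|\le 1/2$. The \emph{open} support of $\varphi$ ensures that $\omega(t_m+p)$ vanishes when $|\eta|=1/2$ and reduces to the single term $\mathbf{x}_{m+n}\varphi(\eta)$ when $|\eta|<1/2$. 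The constraint $|\omega(t_m+p)-\mathbf{x}_m|<\epsilon<1$ then rules out $|\eta|=1/2$, rules out $\mathbf{x}_{m+n}\neq\mathbf{x}_m$ (else the distance equals $1+\varphi(\eta)\ge 1$), and forces $\varphi(\eta)>1-\epsilon$.

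Since the conclusion $\mathbf{x}_{m+n}=\mathbf{x}_m$ holds for every $m\in\mathbb{Z}$, the single integer $n$ is an exact period of the sequence $\mathbf{x}$. Relative density of the $\epsilon$-almost periods of $\omega$ provides such a $p$ with $|p|$ arbitrarily large, forcing $n\neq 0$, so $\mathbf{x}$ is periodic and $H(\omega)$ lies in $\Omega_0^{\mathrm{Per}}\times[0,1)$. The main obstacle I anticipate is the bookkeeping around $\eta$: verifying, via the open support of $\varphi$, that exactly one lattice term is active in the sum defining $\Lambda_{\mathbf{x}}(m+p)$, and cleanly disposing of the boundary case $|\eta|=1/2$; once this is done, the measure-theoretic conclusion follows at once from Lemma~\ref{R2}, with no need to invoke Lemma~\ref{quaseperiodOmega} since periodicity of $\mathbf{x}$ is obtained directly.
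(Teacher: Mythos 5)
Your proof is correct, and it reaches the paper's conclusion by a genuinely different mechanism. The paper also reduces the lemma to the $\nu$-negligibility of $\Omega_0^{\rm Per}$ established in Lemma~\ref{R2}, but it gets there via the Bochner characterization: almost periodicity of $\omega$ gives precompactness of the integer translates $\{\omega(\cdot+k)\}_{k\in\mathbb Z}$, whence $\{\tau_k\mathbf{x}\}$ is precompact in $\ell^\infty(\mathbb Z)$, so $\mathbf{x}$ is an almost periodic sequence, and then Lemma~\ref{quaseperiodOmega} (almost periodic $\{-1,1\}$-valued sequences are periodic) finishes the reduction. You instead work directly with the Bohr definition: for an $\epsilon$-almost period $p$ with $\epsilon<1$, the support condition $\supp\varphi\subset(-1/2,1/2)$ together with $\varphi(0)=1$ and $\varphi^{-1}(1)=\{0\}$ localizes $\omega(m-\delta+p)$ to the single term $\mathbf{x}_{m+n}\varphi(\eta)$, and the three-way case analysis ($|\eta|=1/2$, sign mismatch, sign match) forces the nearest integer $n$ to be an exact period of $\mathbf{x}$; relative density supplies $|p|$ large enough that $n\neq 0$ (and one replaces $n$ by $-n$ if $n<0$ to match the paper's convention of positive periods). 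Your route is more quantitative and self-contained: it bypasses Lemma~\ref{quaseperiodOmega} entirely and makes explicit the passage from almost periods of the function to periods of the sequence, a step the paper handles somewhat loosely (it extracts one convergent subsequence of translates and then asserts almost periodicity of $\mathbf{x}$, which implicitly invokes the full Bochner criterion for sequences). The paper's route is shorter given that Lemma~\ref{quaseperiodOmega} is already in hand and generalizes more readily if $\varphi$ were replaced by a profile without the exact normalization $\varphi=1$ only at $0$, on which your pointwise sampling argument leans. Both arguments conclude identically: the almost periodic elements of $\Omega$ are carried by $H$ into $\Omega_0^{\rm Per}\times[0,1)$, a $\mu$-null set.
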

\begin{proof}
First, remember that a function $\omega\in C(\R)$ is said to be an almost-periodic function if only if the set $\big\{\omega(\cdot+T){\big\}}_{T\in\R}$ is strongly pre-compact 
in $C(\R)$. Let $\omega\in\Omega$ be an almost-periodic function. By definition of $\Omega$, there exist a sequence $\mathbf{x}\in\Omega_0$ and $\delta\in\R$ such that 
$$
\omega(\cdot)=\Lambda_{\mathbf{x}}(\cdot+\delta)=\sum_{m\in \mathbb{Z}}{\mathbf{x}}_m\,\varphi(\cdot+\delta-m).
$$
Since $\omega$ is an almost-periodic function, the set $\big\{\omega(\cdot+k){\big\}}_{k\in\mathbb{Z}}$ is strongly pre-compact in $C(\R)$. Thus, there exists a 
subsequence $\{k_j{\}}_{j\ge 1}$ such that the sequence o functions $\{\omega(\cdot+k_j){\}}_{j\ge 1}$ converges uniformly in $\R$ as $j\to \infty$. Since  
$$
\omega(\cdot+k_j)=\sum_{m\in \mathbb{Z}}{\mathbf{x}}_{m+k_j}\,\varphi(\cdot+\delta-m),
$$
we have $\{\tau_{k_j}{\mathbf{x}}{\}}_{j\ge 1}=\{\omega(\cdot-\delta+k_j){\}}_{j\ge1}$ converges uniformly in $\mathbb{Z}$. Thus, the sequence $\mathbf{x}$ is an almost-periodic sequence. By the Lemma 
\ref{quaseperiodOmega}, it follows that it must be a periodic sequence. Taking into account the $\nu$-negligibleness of the set of periodic sequence 
in $\Omega_0$(see the previous lemma), we conclude the proof of the lemma.
\end{proof}

In \cite{Frid}, the following theorem was established as the crucial point in the proof of the main theorem therein. 
\begin{theorem}[see~\cite{Frid}]\label{PaperHermano}
Let $\Big(\mathcal{E},\mathbb{P}\Big)$ be a probability space and $\mathcal{T}:\R^n\times \mathcal{E}\to\mathcal{E}$ an 
ergodic dynamical system.  Let 
$\gamma:\mathcal{E}\to\R$ be a measurable function and define $F(\cdot,\omega):=\gamma(\mathcal{T}(\cdot)\omega)$. Suppose that 
we have the following:
\begin{enumerate}
\item [(F0)] The family $\Big\{F(\cdot,\omega);\,\omega\in\mathcal{E}\Big\}$ is equicontinuous. 
\item[(F1)] For $\mathbb{P}-$a.a. $\omega\in\mathcal{E}$, 
$$
F(\cdot,\omega)\in\mathcal{W}^*\AP(\R^n).
$$
\end{enumerate}
Then, for $\mathbb{P}-$a.a. $\omega\in\mathcal{E}$, $F(\cdot,\omega)\in\AP(\R^n)$.
\end{theorem}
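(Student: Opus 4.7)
The plan is to exploit the fact that the decomposition defining $\mathcal{W}^*\AP(\R^n)$ is actually a direct sum: any $g \in \AP(\R^n)\cap\mathcal{N}(\R^n)$ is almost periodic with $M(|g|)=0$, and the classical mean-value theory for almost periodic functions forces $g\equiv 0$. Thus $\mathcal{W}^*\AP(\R^n) = \AP(\R^n)\oplus\mathcal{N}(\R^n)$, and assumption (F1) produces, for $\mathbb{P}$-a.e.\ $\omega$, a unique decomposition
\[
F(\cdot,\omega) = f_{ap}(\cdot,\omega) + f_0(\cdot,\omega),\qquad f_{ap}(\cdot,\omega)\in\AP(\R^n),\ f_0(\cdot,\omega)\in\mathcal{N}(\R^n).
\]
The target is to show that $f_0(\cdot,\omega)\equiv 0$ almost surely, which gives $F(\cdot,\omega)=f_{ap}(\cdot,\omega)\in\AP(\R^n)$.

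First I would show that this decomposition is $\mathcal{T}$-equivariant. Translates of $\AP$-functions remain in $\AP$ and translates of $\mathcal{N}$-functions remain in $\mathcal{N}$, so the stationarity identity $F(x+t,\omega) = F(x,\mathcal{T}(t)\omega)$, combined with uniqueness of the decomposition applied to the function $F(\cdot,\mathcal{T}(t)\omega)$, forces
\[
f_{ap}(x+t,\omega) = f_{ap}(x,\mathcal{T}(t)\omega),\qquad f_0(x+t,\omega) = f_0(x,\mathcal{T}(t)\omega).
\]
Setting $\eta(\omega):=f_0(0,\omega)$, I therefore obtain $f_0(y,\omega) = \eta(\mathcal{T}(y)\omega)$, exhibiting $f_0$ as itself a stationary realization over the same ergodic system.

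Next I would upgrade everything to $L^\infty$. Elements of $\mathcal{W}^*\AP$ are bounded on $\R^n$, so (F1) gives $\|F(\cdot,\omega)\|_\infty<\infty$ almost surely. If $\gamma\notin L^\infty(\mathcal{E})$, then $\mathbb{P}(|\gamma|>M)>0$ for every $M$, and Birkhoff's pointwise ergodic theorem applied to the indicator $\mathbf{1}_{|\gamma|>M}$ would give that the set $\{y:|F(y,\omega)|>M\}$ has positive density for almost every $\omega$, contradicting sample-path boundedness. Hence $\gamma\in L^\infty(\mathcal{E})$, and the same argument applied to the stationary realization $f_{ap}(\cdot,\omega)$ (which is almost periodic and therefore bounded for a.e.\ $\omega$) yields $\omega\mapsto f_{ap}(0,\omega)\in L^\infty$. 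Consequently $\eta=\gamma-f_{ap}(0,\cdot)\in L^\infty(\mathcal{E})\subset L^1(\mathcal{E})$.

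The conclusion then follows by applying Birkhoff's ergodic theorem to $|\eta|$: for a.e.\ $\omega$,
\[
\lim_{R\to\infty}\frac{1}{|B(0,R)|}\int_{B(0,R)}|\eta(\mathcal{T}(y)\omega)|\,dy = \mathbb{E}[|\eta|].
\]
The left-hand side equals $M(|f_0(\cdot,\omega)|) = 0$ by (F1), so $\mathbb{E}[|\eta|]=0$, whence $\eta = 0$ $\mathbb{P}$-a.s. Equivariance propagates this to $f_0(y,\omega)=0$ for every $y$ on a full-measure set; since (F0) ensures each $f_0(\cdot,\omega)$ is uniformly continuous, a Fubini argument on a countable dense set of $y$'s upgrades this to $f_0(\cdot,\omega)\equiv 0$ almost surely. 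The main obstacle I would anticipate is the measurability of the canonical projection $\pi:\mathcal{W}^*\AP\to\AP$, so that $\eta$ is genuinely a random variable; this can be handled by establishing that $\pi$ is a sup-norm contraction (the bound $\|f_{ap}\|_\infty\leq\|F\|_\infty$ follows from a relative-density argument matching near-maxima of $f_{ap}$ against points where the equicontinuous $f_0$ is pointwise small by Markov's inequality applied to $M(|f_0|)=0$), with (F0) providing the uniform modulus of continuity needed to pass to joint Borel measurability.
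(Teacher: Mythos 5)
Your skeleton is sound and, in its final step, identical to the paper's: you decompose $F(\cdot,\omega)=f_{ap}(\cdot,\omega)+f_0(\cdot,\omega)$, use uniqueness of the splitting $\mathcal{W}^*\AP(\R^n)=\AP(\R^n)\oplus\mathcal{N}(\R^n)$ to get the equivariance $f_0(y,\omega)=\eta(\mathcal{T}(y)\omega)$ with $\eta(\omega)=f_0(0,\omega)$, and then apply Birkhoff's theorem to $|\eta|$ so that $\mathbb{E}[|\eta|]=M(|f_0(\cdot,\omega)|)=0$. That closing computation is exactly the one in the paper. The $L^\infty$ upgrade of $\gamma$ and the contraction estimate $\|f_{ap}\|_\infty\le\|F\|_\infty$ are also correct.

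The genuine gap is the measurability of $\eta$, which is precisely what the paper calls ``the first and crucial step,'' and your proposed fix does not close it. From joint measurability of $(x,\omega)\mapsto F(x,\omega)$ and equicontinuity (F0) you can only conclude that $\omega\mapsto F(\cdot,\omega)$ is measurable into $C(\R^n)$ with the topology of \emph{locally} uniform convergence (evaluations on a countable dense set generate that Borel structure); but the projection $\pi$ onto the almost periodic part is not continuous for that topology --- e.g.\ the truncations $g_N$ of $\sin x$ to $[-N,N]$ lie in $\mathcal{N}(\R)$, so $\pi(g_N)=0$, yet $g_N\to\sin$ locally uniformly while $\pi(\sin)=\sin$; the family $\{g_N\}$ is even bounded and equicontinuous, so (F0) does not help. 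For the sup-norm topology, where $\pi$ is indeed a contraction, the map $\omega\mapsto F(\cdot,\omega)$ need not be Borel at all: its image is typically an uncountable, uniformly separated set (this happens for the very space $\Omega$ built in this paper, where distinct $\Lambda_{\mathbf{x}},\Lambda_{\mathbf{y}}$ differ by $2$ at some integer), and Borel measurability into such a non-separable image would force far too many subsets of $\mathcal{E}$ to be measurable. The paper resolves this by constructing $\tilde\gamma=F_{ap}(0,\cdot)$ as a \emph{countable} pointwise limit: it convolves (in the $L^1$-mean sense) with the Bochner--Fej\'er approximate identity $\phi_\alpha$, which kills $F_{\mathcal{N}}$, and then uses a separability argument --- a countable dense $D\subset\mathcal{E}$, the separable closed subalgebra $\mathcal{A}_*\subset\AP(\R^n)$ generated by $\{F_{ap}(\cdot,\omega):\omega\in D\}$, and the associated metrizable compact group --- to replace the net $\phi_\alpha$ by a sequence $\phi_j$ valid for a full-measure set of $\omega$ simultaneously. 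Some such construction is needed; without it your $\eta$ is not known to be a random variable and Birkhoff's theorem cannot be invoked.
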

\begin{proof}[Sketch of the proof]\footnote{The author thanks H.~Frid for providing him the sketch of the proof presented here.}
The first and crucial step of the proof is to find a suitable way to extract the almost periodic component from $F(\cdot,\omega)$, 
for each $\omega\in\mathcal{E}$, that is, since $F(\cdot,\omega)=F_{ap}(\cdot,\omega)+F_{\mathcal{N}}(\cdot,\omega)$, with 
$F_{ap}(\cdot,\omega)\in\AP(\R^n)$ and $F_{\mathcal{N}}(\cdot,\omega)\in\mathcal{N}(\R^n)$ for a.a. $\omega\in\mathcal{E}$, we need 
to devise a way to obtain $F_{ap}(\cdot,\omega)$ from $F(\cdot,\omega)$ such that $F_{ap}:\R^n\times\mathcal{E}\to\R$ is a stationary 
ergodic process with $F_{ap}(y,\omega)=\tilde{\gamma}(\mathcal{T}(y)\omega)$, where $\tilde{\gamma}(\omega)=F_{ap}(0,\omega)$. 
The way to process this extraction of that almost periodic component is by using the approximation of the identity $\phi_{\alpha}$ which 
is known to exist, both from classical Bochner-Fej\'er polynomials(see, e.g.,\cite{B}) and from the fact that the Bohr compact is a topological 
group(see, e.g., \cite{HewittRoss}) for which the existence of such approximation is known (see~\cite{HewittRoss}) and it is a generalized 
sequence, or net, in $\AP(\R^n)$. Using the approximation of the identity we have 
$F(\cdot,\omega)*\phi_{\alpha}=F_{ap}(\cdot,\omega)*\phi_{\alpha}$, for a.a. $\omega\in\mathcal{E}$, where $*$ is the convolution in the 
$L^1-$mean, and the equation follows from definition of $\mathcal{N}(\R^n)$ which implies that 
$F_{\mathcal{N}}(\cdot,\omega)*\phi_{\alpha}=0$, for a.a. $\omega\in\mathcal{E}$. Now, the fact that $\phi_{\alpha}$ is a net is an inconvenience 
to be overcome by reducing $\phi_{\alpha}$ to a sequence $\{\phi_j{\}}_{j\ge 1}$ which serves as well as an approximation of the unity for the family 
$\{F_{ap}(\cdot,\omega);\,\omega\in \mathcal{E}_*\}$, where $\mathcal{E}_*\subset\mathcal{E}$ and $\mathbb{P}(\mathcal{E}_*)=1$. The way to achieve 
this reduction of $\phi_{\alpha}$ to a sequence $\phi_j$ in \cite{Frid} was to introduce a topology in $\mathcal{E}$ as a dense subset of a separable compact 
space so that the family $\{F(x,\cdot);\,x\in{\mathbb{Q}^n}\}$ generates the topology given to $\mathcal{E}$. This allows us to take a countable dense subset 
$D\subset\mathcal{E}$ and then we consider the separable closed subalgebra $\mathcal{A}_*$ of $\AP(\R^n)$ generated by the unit and 
$\{F_{ap}(\cdot,\omega);\,\omega\in D\}$, for which we may obtain from $\phi_{\alpha}$ from $\phi_{\alpha}$ a sequence $\phi_j$ which is an approximation 
of the identity for the whole family $\{F_{ap}(\cdot,\omega);\,\omega\in\mathcal{E}_*\}$. Indeed, one can consider the compact space associated with 
$\mathcal{A}_*$ by stone's theorem(see, e.g., \cite{DS}), which is then, after passing to a quotient space if necessary, a group for which there is an 
approximate identity sequence $\phi_n$ that may be seen as a subsequence of $\phi_{\alpha}$. The final step is to prove that 
$F(y,\omega)=F_{ap}(y,\omega)$ for a.a. $\omega\in\mathcal{E}$ and all $y\in\R^n$, which follows from the decomposition of 
$F(\cdot,\omega)=F_{ap}(\cdot,\omega)+F_{\mathcal{N}}(\cdot,\omega)$ and Birkhoff theorem, namely, that for each $y\in\R^n$, we have, denoting 
$M(g)$ the mean value of $g$, when $g:\R^n\to\R$ possesses mean value, 
\begin{eqnarray*}
&&\int_{\mathcal{E}}|{\gamma}(\mathcal{T}(y)\omega)-\tilde{\gamma}(\mathcal{T}(y)\omega)|\,d\mathbb{P}(\omega)=
\int_{\mathcal{E}}|{\gamma}(\omega)-\tilde{\gamma}(\omega)|\,d\mathbb{P}(\omega)\\
&&\qquad=M\Big(|{\gamma}(\mathcal{T}(\cdot)\omega_*)-\tilde{\gamma}(\mathcal{T}(\cdot)\omega_*)|\Big)=0,
\quad\text{for $\mathbb{P}-$a.a. $\omega_*\in\mathcal{E}$},
\end{eqnarray*}
by the invariance of $\mathbb{P}$ with respect to $\mathcal{T}(y)$, Birkhoff's relation and by the fact that 
$F(\cdot,\omega)\in\mathcal{W}^*\AP(\R^n)$.
\end{proof}

\begin{lemma}\label{UnifCont}
Define $F:\R\times\Omega\to\R$ by $F(x,\omega):=f(T(x)\omega)$, where the function 
$f(\omega)=\omega(0)$ and the dynamical system is such that $T(x)\omega=\omega(\cdot+x)$. Then, the following property holds:
$$
\lim_{s\to 0}\sup_{|x-z|<s}\bigg(\sup_{\omega\in\Omega}|F(x,\omega)-F(z,\omega)|\bigg)=0.
$$
\end{lemma}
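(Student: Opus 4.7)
The plan is to exploit the explicit form of the elements of $\Omega$ together with the compact support and uniform continuity of the building block $\varphi$. First, I would unpack the definitions: since $T(x)\omega=\omega(\cdot+x)$ and $f(\omega)=\omega(0)$, we have simply
\[
F(x,\omega)=\omega(x),
\]
so the claim reduces to the statement that the family $\Omega$ is equicontinuous, and in fact uniformly equicontinuous, on $\R$.

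Next, I would fix $\omega\in\Omega$ and write $\omega(\cdot)=\Lambda_{\mathbf{x}}(\cdot+\delta)=\sum_{m\in\Z}\mathbf{x}_m\,\varphi(\cdot+\delta-m)$ for some $(\mathbf{x},\delta)\in\Omega_0\times\R$. Then
\[
\omega(x)-\omega(z)=\sum_{m\in\Z}\mathbf{x}_m\bigl[\varphi(x+\delta-m)-\varphi(z+\delta-m)\bigr],
\]
and since $|\mathbf{x}_m|=1$ the triangle inequality gives $|\omega(x)-\omega(z)|\le\sum_{m\in\Z}|\varphi(x+\delta-m)-\varphi(z+\delta-m)|$.

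The key observation — this is the step that makes the sum manageable uniformly in $\omega$ — is that $\supp\varphi\subset(-1/2,1/2)$, so the $m$th summand vanishes unless $m$ lies in $(x+\delta-1/2,x+\delta+1/2)\cup(z+\delta-1/2,z+\delta+1/2)$. For $|x-z|<1/2$ this union has length less than $3/2$ and hence contains at most two integers; in particular the sum effectively has at most two nonzero terms, regardless of $\mathbf{x}$ and $\delta$. Then, because $\varphi$ is continuous with compact support it is uniformly continuous on $\R$, so given $\ve>0$ there is $\eta\in(0,1/2)$ with $|\varphi(a)-\varphi(b)|<\ve/2$ whenever $|a-b|<\eta$. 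Combining these for $|x-z|<\eta$ yields $|\omega(x)-\omega(z)|<\ve$, and taking first the supremum over $\omega\in\Omega$ and then over $|x-z|<s$ gives the claim.

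I do not expect a genuine obstacle here: the only subtle point is recognizing that the apparently infinite sum defining $\Lambda_{\mathbf{x}}$ is locally a sum of at most two terms, a fact that is built into the construction via the choice $\supp\varphi\subset(-1/2,1/2)$; once this is noted, the uniformity across $\omega\in\Omega$ follows from the uniform continuity of the single function $\varphi$.
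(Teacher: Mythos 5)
Your proof is correct and follows essentially the same route as the paper: both arguments rest on the two facts that $\supp\varphi\subset(-1/2,1/2)$ localizes the sum defining $\omega$ and that $\varphi$ is uniformly continuous. The only difference is bookkeeping: the paper splits into the cases where $x+\delta$ and $z+\delta$ lie in the same or in adjacent unit cells (using a buffer distance from $\supp\varphi$ to $\partial(-1/2,1/2)$ to handle the second case), whereas you observe that at most two summands are nonzero and absorb the factor of two into $\varepsilon/2$ --- a slightly cleaner organization of the same estimate.
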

\begin{proof}
1. Let $\varphi\in C_c(\R)$ be the function used in definition of the set $\Omega$. Remember that by 
definition of the set $\Omega$, given $\omega\in\Omega$ there exists a sequence 
$\{\mathbf{x}_m{\}}_{m\in\mathbb{Z}}\subset\{-1,1\}$ and $\delta\in\R$ such that 
$$
F(\cdot,\omega)=f(T(\cdot)\omega)=\omega(\cdot)=\sum_{m\in\mathbb{Z}}\mathbf{x}_m\varphi(\cdot+\delta-m).
$$

Since $\varphi$ is 
uniformly continuous, given $\epsilon>0$ there exists $t_0>0$ such that 
$$
|x-z|<t_0\Rightarrow |\varphi(x)-\varphi(z)|<\epsilon.
$$
Now, define $t_1:=1/2\,\mathrm{dist}\left(\supp \varphi,\partial (-1/2,1/2)\right)$ and 
take $s_0:=\min\{t_0,t_1\}$. Let $x,z\in\R$ be such that $|x-z|<s_0$. Since 
$\R=\cup_{m\in\mathbb{Z}}\left(-1/2,1/2\right]+m$, we have two possibilities:
\begin{itemize}
\item There exists $m_0\in\mathbb{Z}$ such that $x+\delta,z+\delta\in\left(-1/2,1/2\right]+m_0$. In this case,
$F(x,\omega)=\mathbf{x}_{m_0}\varphi(x+\delta-m_0)$ and $F(z,\omega)=\mathbf{x}_{m_0}\varphi(z+\delta-m_0)$, which implies 
$$
|F(x,\omega)-F(z,\omega)|<\epsilon.
$$
\item There exists $m_0\in\mathbb{Z}$ and $l_0\in\mathbb{Z}$ such that 
$x+\delta\in\left(-1/2,1/2\right]+m_0$, $z+\delta\in\left(-1/2,1/2\right]+l_0$ and 
$x+\delta-m_0,z+\delta-l_0\notin\supp\varphi$. Therefore,
$F(x,\omega)=0=F(z,\omega)=0$.
\end{itemize}
In any case,
$$
|x-z|<s_0\Rightarrow |F(x,\omega)-F(z,\omega)|<\epsilon\qquad\text{for all $\omega\in\Omega$}.
$$
 \end{proof}

Now, we are ready to prove the main result of this paper. 
\begin{theorem}\label{MainResult}
Let $\Big(\Omega,\mu\Big)\subset\BUC(\R)$ be the probability space introduced in the Lemma \ref{R1.1}. Then, for $\mu-$a.a. $\omega\in\Omega$, 
$$
\omega(\cdot)\notin\mathcal{W}^*\AP(\R).
$$
\end{theorem}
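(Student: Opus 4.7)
The plan is to argue by contradiction, combining Theorem~\ref{PaperHermano} with Lemma~\ref{R3}, Lemma~\ref{UnifCont}, and the ergodicity of the dynamical system $T$ given by Lemma~\ref{R1.1}\,(iii).

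Define $\gamma:\Omega\to\R$ by $\gamma(\omega):=\omega(0)$; this is continuous, and the associated stationary process is
\[
F(x,\omega):=\gamma(T(x)\omega)=\omega(x).
\]
Evidently $F(\cdot,\omega)\in\mathcal{W}^*\AP(\R)$ is equivalent to $\omega\in\mathcal{W}^*\AP(\R)$, so the theorem reduces to showing that
\[
A:=\{\omega\in\Omega:\omega(\cdot)\in\mathcal{W}^*\AP(\R)\}
\]
has $\mu$-measure zero. I would first observe that $\mathcal{W}^*\AP(\R)=\AP(\R)+\mathcal{N}(\R)$ is translation-invariant: $\AP(\R)$ clearly is, and for $g\in\mathcal{N}(\R)$ and $z\in\R$ one has
\[
\bigl|M(|g(\cdot+z)|)-M(|g|)\bigr|\le\lim_{R\to\infty}\frac{2\|g\|_\infty|z|}{2R}=0,
\]
so $g(\cdot+z)\in\mathcal{N}(\R)$ as well. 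Hence $T(z)A=A$ for every $z\in\R$, and ergodicity of $T$ forces $\mu(A)\in\{0,1\}$.

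Next, assume for contradiction that $\mu(A)=1$. Then hypothesis $(F1)$ of Theorem~\ref{PaperHermano} holds with $(\mathcal{E},\mathbb{P},\mathcal{T},\gamma)=(\Omega,\mu,T,\gamma)$, while hypothesis $(F0)$ --- equicontinuity of the family $\{F(\cdot,\omega):\omega\in\Omega\}$ --- is exactly the conclusion of Lemma~\ref{UnifCont}. Theorem~\ref{PaperHermano} then forces $\omega(\cdot)=F(\cdot,\omega)\in\AP(\R)$ for $\mu$-a.a.\ $\omega$, in direct contradiction with Lemma~\ref{R3}. Therefore $\mu(A)=0$, which is the claim.

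The main technical point to watch out for is the $\mu$-measurability of $A$, needed to legitimize the ergodic dichotomy. This is essentially bookkeeping: $\AP(\R)$ is a closed, hence Borel, subspace of $\BUC(\R)$ and $\mathcal{N}(\R)$ is Borel there, so $A$ is analytic in $\Omega$ once one transports the Borel structure inherited from $\Omega_0\times[0,1)$ via the bijection $H$ of Lemma~\ref{R1}; analytic sets are universally measurable, which suffices. Beyond this bookkeeping, the argument is a direct assembly of results already established earlier in the paper.
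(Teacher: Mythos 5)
Your overall architecture is the same as the paper's: introduce $\gamma(\omega)=\omega(0)$, note that $\mathcal{F}:=\{\omega\in\Omega:\omega\in\mathcal{W}^*\AP(\R)\}$ is $T$-invariant, invoke the zero--one law, and rule out measure one by feeding Lemma~\ref{UnifCont} (hypothesis (F0)) and the assumed (F1) into Theorem~\ref{PaperHermano} to contradict Lemma~\ref{R3}. The translation-invariance computation for $\mathcal{N}(\R)$ is correct.

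The genuine gap is the measurability of $\mathcal{F}$, which you dismiss as ``bookkeeping'' via an analyticity claim. This is precisely the point the paper identifies as the main obstacle and spends most of its proof on. Your claim that $\mathcal{F}$ is analytic is not substantiated: $\mathcal{W}^*\AP(\R)=\AP(\R)+\mathcal{N}(\R)$ is the image of $\AP(\R)\times\mathcal{N}(\R)$ under addition, but $\BUC(\R)$ and $\AP(\R)$ are \emph{non-separable} in the sup norm, so the standard fact ``continuous image of a Borel subset of a Polish space is analytic, hence universally measurable'' does not apply; transporting the question to the Polish space $\Omega_0\times[0,1)$ via $H$ does not help, because to exhibit $\mathcal{F}$ as a projection of a Borel set you would still need to parametrize the almost periodic summand $g$ by a Polish space, and no such parametrization is offered. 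Without this, the dichotomy $\mu(\mathcal{F})\in\{0,1\}$ and, more importantly, hypothesis (F1) of Theorem~\ref{PaperHermano} (which requires the complement of $\mathcal{F}$ to be a genuine null set, i.e.\ $\mathcal{F}$ to have \emph{inner} measure one, not merely outer measure one) are not available. The paper circumvents this without ever proving measurability: it builds a measurable $T$-invariant envelope $\mathcal{F}^+\supset\mathcal{F}$ realizing the infimum of measures of measurable supersets (via the $\tau$-invariant set $C_0^+=\cap_k\tau_k(C_1)$ in $\Omega_0$), gets $\mu(\mathcal{F}^+)\in\{0,1\}$, and then --- assuming $\mu(\mathcal{F}^+)=1$ --- equips $\mathcal{F}$ itself with the trace $\sigma$-algebra and the measure $\mu^+(A\cap\mathcal{F}):=\mu(A\cap\mathcal{F}^+)$, verifies that the restricted shift is an ergodic dynamical system on $(\mathcal{F},\mathcal{A},\mu^+)$, and applies Theorem~\ref{PaperHermano} there, where (F1) holds for \emph{every} point. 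You should either supply a genuine proof of universal measurability of $\mathcal{F}$ (nontrivial, and not obviously true) or adopt an envelope construction of this kind.
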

\begin{proof}
1. Let $\Big(\Omega,\mu\Big)$ be the probability space and $T:\R\times\Omega\to\Omega$ be the dynamical systems considered in the Lemma \ref{R1.1}. 
Define $\mathcal{F}:=\Big\{\omega\in\Omega;\,\omega(\cdot)\in \mathcal{W}^{*}\AP(\R)\Big\}$. Our aim is to show that 
$\mu(\mathcal{F})=0$. But, the lack of measurability of the 
set $\mathcal{F}$ is a problem. We overcome this lack of measurability of the set 
$\mathcal{F}$ by showing the existence of an invariant and measurable set $\mathcal{F}^{+}$ such that $\mathcal{F}\subset\mathcal{F}^{+}$. 
Then, we use the Theorem~\ref{PaperHermano} to conclude that $\mu(\mathcal{F}^{+})=0$ and so finish the proof of the theorem.

2. Let $E\subset\Omega$(not necessarily measurable) be such that $T(x)E=E$ for all $x\in\R$. Due to this invariance and the identification 
given by the Lemma \ref{R1.1} item $(i)$, we have that $H(E)$ is 
invariant by the dynamical system $S:\R\times \Omega_1\to\Omega_1$. 
This implies that we can find a set $C_0\subset\Omega_0$(not necessarily measurable) 
such that $H(E)=C_0\times [0,1)$. Moreover, we claim that 
$C_0\subset\tau_k(C_0)$ for all $k\in\mathbb{Z}$, where $\{\tau_k{\}}_{k\in\mathbb{Z}}$ is the 
shift operator acting in $\Omega_0$. The claim can be proved by reasoning in the following way: a sequence $\mathbf{x}\in C_0$. Due to 
the invariance of the set $C_0\times[0,1)$ by the dynamical system $S$, we must have 
$(\mathbf{x},0)\in S(k)\Big(C_0\times [0,1)\Big)$. Thus, there exists a sequence $\mathbf{y}\in C_0$ and 
$\delta\in[0,1)$ such that 
$$
(\mathbf{x},0)=S(k)(\mathbf{y},\delta)=
\Big({\tau}_{\left\lfloor k\,+\delta \right\rfloor} \mathbf{y},k+\delta- \left\lfloor k\,+\delta \right\rfloor\Big)=
\Big(\tau_k\mathbf{y},\delta\Big),
$$
which gives that $\mathbf{x}=\tau_k\mathbf{y}\in \tau_k(C_0)$. This proves the claim. Taking into account 
the arbitrariness of $k\in\mathbb{Z}$ in the claim, we have $C_0\subset \tau_{-k}(C_0)$. As a consequence, 
$\tau_k(C_0)=C_0$ for all integer $k$. 

3. Now, observe that there exists a measurable set $C_1\subset\Omega_0$ such that $C_0\subset C_1$ and 
$$
\nu(C_1)=\inf\Big\{\nu(C);\,\text{$C\subset\Omega_0$ is measurable and $C_0\subset C$}\Big\}.
$$
Since the set $C_0$ is $\tau-$invariant, we have that $C_0=\tau_k(C_0)\subset \tau_k(C_1)$ for all integer $k$. 
Hence, $C_0\subset \cap_{k\in\mathbb{Z}}\tau_k(C_1)=:C_0^{+}$. Moreover, we can see that $C_0^{+}$ is an 
$\tau-$invariant set and $\nu(C_0^{+})=\nu(C_1)$. Therefore, the set 
$\Big(C_0^{+},[0,1)\Big)$ is invariant by the dynamical system $S$, that is, 
$$
S(z)\Big(C_0^{+},[0,1)\Big)=\Big(C_0^{+},[0,1)\Big),
$$
for all $z\in\R$. Consequently, we have the existence of a measurable set $E^{+}\subset\Omega$ such that 
\begin{enumerate}
\item[(P1)] $E\subset E^{+}:=H^{-1}\Big(C_0^{+},[0,1)\Big)$.
\item[(P2)] $T(z)(E^{+})=E^{+}$ for all $z\in\R$.
\item[(P3)]
$$
\mu(E^{+})=\inf\Big\{\mu(C);\,\text{$C\subset\Omega$ is measurable and $E\subset C$}\Big\}.
$$
\end{enumerate}
Considering the ergodicity of the dynamical system $T$, we must have $\mu(E^{+})\in\{0,1\}$. 

4. Let $\mathcal{F}$ be as in the step $1$. Since the set 
$\mathcal{F}$ is invariant by the dynamical system $T$, we can apply the step $3$ for $E=\mathcal{F}$ 
and obtain the existence of a measurable set $\mathcal{F}^{+}\subset\Omega$ having the properties 
P1,P2 and P3 above. Furthermore, we have that $\mu(\mathcal{F}^{+})\in\{0,1\}$. We claim that $\mu(\mathcal{F}^ {+})=0$. 
Suppose that the opposite happens, that is, $\mu(\mathcal{F}^{+})=1$. In this case, we can endow the set $\mathcal{F}$ with 
the following probability structure: Consider the $\sigma-$algebra 
$$
\mathcal{A}:=\Big\{E\subset\mathcal{F};\,\text{$E=A\cap \mathcal{F}$ for some measurable set $A\subset\Omega$}\Big\}
$$
and define the probability measure $\mu^{+}:\mathcal{A}\to[0,1]$ as $\mu^{+}(E):=\mu(A\cap \mathcal{F}^{+})$. Also, 
define the mapping $\mathcal{T}:\R\times\mathcal{F}\to\mathcal{F}$ by $\mathcal{T}(x)\omega:=T(x)\omega$. It is clear 
that $\mathcal{T}(0)=Id$ and $\mathcal{T}(x+y)=\mathcal{T}(x)\circ \mathcal{T}(y)$. If $E\in\mathcal{A}$, then, 
$E=A\cap\mathcal{F}$ for some measurable set $A\subset\Omega$. Hence, due to the 
invariance of $\mathcal{F}$ by $T$, we have $\mathcal{T}(x)E=T(x)A\cap\mathcal{F}$. Since $T$ is a dynamical system on 
$\Omega$, we have $T(x)A$ is a measurable set. Consequently, $\mathcal{T}(x)(E)\in\mathcal{A}$ and due to the invariance 
of the set $\mathcal{F}^{+}$ by $T$, we get
$$
\mu^{+}\Big(\mathcal{T}(E)\Big):=\mu\Big(T(x)A\cap\mathcal{F}^{+}\Big)=\mu\Big(T(x)\left(A\cap\mathcal{F}^{+}\right)\Big)
=\mu\Big(A\cap\mathcal{F}^{+}\Big)=\mu^{+}(E).
$$
Therefore, $\mathcal{T}$ is a dynamical system acting on the probability space $\Big(\mathcal{F},\mathcal{A},\mu^{+}\Big)$. 
It remains to show the ergodicity of $\mathcal{T}$. For this, let $E\in\mathcal{A}$ be such that 
$\mathcal{T}(x)(E)=E$ for all $x\in\R$. By definition of $\mathcal{T}$, this means $T(x)E=E$ for all $x\in\R$. Using the step 
$3$, we can find a measurable set $E^{+}\subset\Omega$ satisfying the properties P1, P2 and P3. Moreover, 
$\mu(E^{+})\in\{0,1\}$. Suppose that $\mu(E^{+})=1$ and write $E=A\cap\mathcal{F}$. By the property P3, we have 
$$
1=\mu(E^{+})=\inf\Big\{\mu(C);\,\text{$C\subset\Omega$ is measurable and $E\subset C$}\Big\}\le 
\mu\Big(A\cap\mathcal{F}^{+}\Big)\le 1,
$$
for $E=A\cap\mathcal{F}\subset A\cap \mathcal{F}^{+}$. Therefore, $\mu^{+}(E):=\mu\Big(A\cap\mathcal{F}^{+}\Big)=1$. 

It is clear that if $\mu(E^{+})=0$ then $\mu^{+}(E)=0$. Thus, $\mathcal{T}$ is ergodic. 

Define $f:\Omega\to\R$ by $f(\omega)=\omega(0)$. By the  Lemma \ref{UnifCont}, the function 
$F(x,\omega)=f(\mathcal{T}(x)\omega)$ satisfies the hypotheses of the Theorem \ref{PaperHermano}. 
Consequently, $\mu^{+}\Big(\left\{\omega\in\mathcal{F};\,\omega(\cdot)\in\AP(\R)\right\}\Big)=1$, 
which is a contradiction with the Lemma \ref{R3}. Thus, we must have $\mu(\mathcal{F}^{+})=0$ and the 
theorem is proved.
\end{proof}

\section*{Acknowledgements}
The author acknowledges GOD for the inspiration. He also thanks the many helpful suggestions of H. Frid during the preparation of the paper and 
the support from CNPq, through grant proc.~302331/2017-4.

\end{document}